\theoremstyle{plain}
\newtheorem{theorem}{Theorem}
\newtheorem{claim}[theorem]{Claim}
\newtheorem{conjecture}[theorem]{Conjecture}
\newtheorem{lemma}[theorem]{Lemma}
\numberwithin{equation}{section}
\begin{document}
\title[Locally irregular decompositions of regular graphs]
{Decomposability of regular graphs to $4$ locally irregular subgraphs}
\author{Jakub Przyby{\l}o}
\address{AGH University of Krakow, al. A. Mickiewicza
30, 30-059 Krakow, Poland}
\email{jakubprz@agh.edu.pl}

\begin{abstract}
A locally irregular graph is a graph whose adjacent vertices have distinct degrees. It was conjectured that every connected graph is edge decomposable to $3$ locally irregular subgraphs, unless it belongs to a certain family of exceptions, including graphs of small maximum degrees, which are not decomposable to any number of such subgraphs. Recently Sedlar and \v{S}krekovski exhibited a counterexample to the conjecture, which necessitates a decomposition to (at least) $4$ locally irregular subgraphs. We prove that every $d$-regular graph with $d$ large enough, i.e. $d\geq 54000$, is decomposable to $4$ locally irregular subgraphs. Our proof relies on a mixture of a numerically optimized application of the probabilistic method and certain deterministic results on degree constrained subgraphs due to Addario-Berry, Dalal, McDiarmid, Reed, and Thomason, and to Alon and Wei, introduced in the context of related problems concerning irregular subgraphs.
\end{abstract}

\keywords{locally irregular graph, graph decomposition, edge set partition}
\maketitle

\section{Introduction}

It is one of the most basic observations in graph theory that no simple, finite graph $G$ with at least $2$ vertices can be completely irregular, i.e. its vertices cannot have all pairwise distinct degrees. Some alternative concepts of irregular graphs were thus investigated already by Chartrand, Erd\H{o}s and Oellermann~\cite{ChartrandErdosOellermann}, and others, see e.g.~\cite{k-PathIrregular,HighlyIrregular}. 
More recently, a notion of a \emph{locally irregular graph} was introduced in~\cite{LocalIrreg_1}.
It is a graph $G=(V,E)$ whose adjacent vertices have distinct degrees, i.e. $d_G(u)\neq d_G(v)$ for every edge $uv\in E$. In~\cite{LocalIrreg_1} also a problem of edge decomposability of graphs to locally irregular subgraphs was raised. We say that a graph $G=(V,E)$ is \emph{decomposable} to $k$ locally irregular subgraphs if its edge set can be partitioned to $k$ subsets: $E=E_1\cup E_2\cup\ldots\cup E_k$ such that the subgraph $G_i=(V,E_i)$ is locally irregular for $i=1,2,\ldots,k$.

This concept is related with a recent direction of research of Alon and Wei~\cite{Alon-Wei}, also developed by Fox,  Luo and Pham~\cite{FoxIrregular}, who investigated so-called \emph{irregular subgraphs}. 
See~\cite{PrzybyloAlmostIrregularAMC} for discussion and results linking the two concepts.
The notion of locally irregular graphs is also  strongly related with yet another well-known descendant of research on irregular graphs~\cite{ChartrandErdosOellermann} and so-called irregularity strength of graphs, see e.g.~\cite{Chartrand},  which is the \emph{1--2--3 Conjecture}. This was posed in 2004 by
Karo\'nski,  {\L}uczak and Thomason~\cite{123KLT} and recently confirmed by Keusch~\cite{Keusch2}. 
It may be phrased as follows. Given any connected graph with at least $3$ vertices, one may multiply every its edge $e$ to at most $3$ copies of $e$ so that the resulting multigraph is locally irregular, i.e. its adjacent vertices have pairwise distinct degrees. 
See e.g.~\cite{LocalIrreg_1} for a more comprehensive discussion on relations between these concepts. 

Also in~\cite{LocalIrreg_1}  a family $\mathfrak{T}'$ of all connected graphs which are not decomposable to any number of locally irregular subgraphs was identified. The central initial problem in the study devoted to locally irregular graphs was the following conjecture.
\begin{conjecture}[\cite{LocalIrreg_1}]\label{LIDConjecture1}
Each connected graph $G$ which does not belong to $\mathfrak{T}'$ is decomposable to $3$ locally irregular subgraphs.
\end{conjecture}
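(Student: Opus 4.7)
The plan is to split the argument into a dense/high-degree regime handled probabilistically and a sparse/low-degree regime handled by structural induction, following the general blueprint that has been successful elsewhere in the locally irregular decomposition literature.

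In the dense regime (large minimum degree $\delta(G)$) I would first randomly partition $E(G)$ into three classes $E_1,E_2,E_3$ with carefully chosen probabilities $p_1,p_2,p_3$. Chernoff concentration combined with the Lov\'asz Local Lemma should give that, in each $G_i=(V,E_i)$, every vertex $v$ has degree within an $O(\sqrt{d_G(v)\log d_G(v)})$ window of $p_i d_G(v)$, and that the windows for adjacent vertices of \emph{distinct} degree in $G$ remain separable. To actually force distinct degrees on each edge, I would then invoke a degree-constrained subgraph theorem of Addario-Berry--Dalal--McDiarmid--Reed--Thomason type, which allows one to prescribe, up to an additive $O(1)$ slack, a target degree at every vertex and realize it by a spanning subgraph. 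Iterating across the three color classes should provide enough flexibility to guarantee $d_{G_i}(u)\neq d_{G_i}(v)$ for every edge $uv$ and every $i$, at least away from vertices of very small degree.

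In the sparse regime I would treat classes deterministically. Trees outside $\mathfrak{T}'$ should yield to a rooted leaf-by-leaf processing; cacti and unicyclic graphs reduce to tree-like pieces once the cycle is handled by hand; and graphs of bounded maximum degree would be attacked by inducting on the removal of a carefully chosen locally irregular subgraph $H\subseteq G$, so that $G-E(H)$ remains outside $\mathfrak{T}'$ and admits a $2$-decomposition by induction. For this bookkeeping to close, one has to use the explicit description of $\mathfrak{T}'$ given in \cite{LocalIrreg_1} and verify by finite case analysis that such an $H$ always exists.

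The main obstacle, as one would expect from the problem's history, lies in the low-maximum-degree case -- especially subcubic graphs, where only degrees $1,2,3$ are available and the ``slack'' exploited so freely in the probabilistic step collapses. Parity obstructions and small-scale rigidity dominate there, and any inductive step risks landing in $\mathfrak{T}'$. I would be especially wary that an isolated subcubic configuration could in fact be a genuine obstruction not listed in $\mathfrak{T}'$, in which case the conjecture would fail outright; pinning down whether such a configuration exists seems to be the genuine crux of the problem, and it is precisely where a weakening to $4$ subgraphs would become attractive.
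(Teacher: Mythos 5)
This statement is a \emph{conjecture} quoted from \cite{LocalIrreg_1}; the paper offers no proof of it, and indeed none is possible: as the introduction of this very paper recounts, Sedlar and \v{S}krekovski \cite{SedlarSkrekovski-LocIrreg} exhibited a connected graph outside $\mathfrak{T}'$ that is \emph{not} decomposable into $3$ locally irregular subgraphs (it requires $4$). So any completed version of your argument would have to break somewhere, and the place it breaks is exactly the one you flagged yourself at the end: the low-maximum-degree regime harbours a genuine obstruction not listed in $\mathfrak{T}'$. Your dense-regime sketch (random tripartition plus a degree-constrained subgraph theorem) is in the right spirit for large minimum degree --- Theorem~\ref{Theorem107} and its extension in \cite{LocalIrreg_2} confirm that $3$ parts suffice there --- but the inductive/structural handling of sparse graphs cannot be made to close, because the claim it is trying to establish is false. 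The correct target is the weakened Conjecture~\ref{LIDConjecture2} with $4$ subgraphs, which is what the present paper addresses for regular graphs of large degree; the counterexample, not a proof, is the resolution of the statement as written.
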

It is worth mentioning that the family $\mathfrak{T}'$ includes only graphs with maximum degree upper bounded by $3$. It thus was natural to investigate wether graphs with sufficiently large degrees are decomposable to $3$ locally irregular subgraphs. 
In~\cite{LocalIrreg_1} this was in particular proven to hold for regular graphs, which are in some sense as far from being locally irregular as possible.
\begin{theorem}[\cite{LocalIrreg_1}]\label{Theorem107}
Each $d$-regular graph with $d\geq 10^7$ is decomposable to $3$ locally irregular subgraphs.
\end{theorem}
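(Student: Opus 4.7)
\noindent\emph{Proof plan.} The plan is to produce the three locally irregular subgraphs in two stages: first a deterministic edge decomposition $E(G)=E(G_1)\sqcup E(G_2)\sqcup E(G_3)$ in which every vertex has degree close to $d/3$ in each part, and then a probabilistic rebalancing that breaks all remaining degree coincidences on adjacent vertex pairs within each part.

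For the deterministic stage I would appeal to the theorem of Addario-Berry, Dalal, McDiarmid, Reed, and Thomason on degree-constrained subgraphs. Applied to $G$, it delivers a spanning subgraph $G_1$ whose degrees lie in an interval $[\lfloor d/3\rfloor-c,\lceil d/3\rceil+c]$ for an absolute constant $c$. The complement $G-E(G_1)$ is, up to $\pm c$ at every vertex, a $(2d/3)$-regular graph; a second application of the same tool splits it into $G_2$ and $G_3$ with degrees concentrated in a similarly narrow window around $d/3$. After this step the only possible obstructions to local irregularity are edges $uv\in E(G_i)$ on which $d_{G_i}(u)=d_{G_i}(v)$, and the two common degrees must lie in a bounded set of size $O(1)$.

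In the probabilistic stage I would randomise the class membership of a carefully selected collection of edges at each vertex. Concretely, at each $v$ I would attach an independent mean-zero integer shift to each $d_{G_i}(v)$, of variance comparable to $d$, by randomly swapping a set of $\Theta(\sqrt{d})$ edges of $v$ across the three classes. An anti-concentration estimate of local central-limit-theorem type then bounds the probability of the bad event $A_{uv}^i=\{d_{G_i}(u)=d_{G_i}(v)\}$ by $O(1/\sqrt{d})$, and this event depends only on the randomness supported on edges within distance two of $\{u,v\}$, whose number is at most $O(d^2)$. A numerically tuned application of the Lov\'asz Local Lemma should then yield, with positive probability, a decomposition in which no $A_{uv}^i$ occurs. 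Throughout I would exploit Alon and Wei's recent bounds on near-irregular subgraphs to tighten the probabilistic estimates beyond what Chernoff alone provides.

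The main obstacle is the tension between the per-event failure probability and the dependence degree: the raw product $(1/\sqrt{d})\cdot d^2 = d^{3/2}$ is far from the LLL threshold, so I have to sharpen either the anti-concentration (by enlarging the random perturbation and invoking a more refined local limit theorem) or the combinatorial dependence (by exploiting that each edge obstructs at most one of the three classes, not all three, and that most vertex pairs are already separated by the deterministic stage). It is precisely this constant-optimisation that forces the quantitative hypothesis $d\ge 10^7$; a cruder analysis would demand a substantially larger threshold.
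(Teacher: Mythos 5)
First, a point of reference: the paper does not prove this statement — Theorem~\ref{Theorem107} is quoted from~\cite{LocalIrreg_1} — so there is no in-paper proof to compare against, only the strategy the paper uses for its own Theorem~\ref{MainResult-4-decomposability_of_regular}. Judged on its own terms, your plan has a genuine gap at its core. After your deterministic stage each $G_i$ is nearly $\frac{d}{3}$-regular, which is the worst possible starting point for local irregularity: essentially every edge of every $G_i$ is an obstruction, and you are asking the random rebalancing to fix all of them at once. The quantitative heart of that stage cannot work. The best anti-concentration available for a degree perturbed by at most $d$ independent bounded increments is $\Theta(1/\sqrt{d})$ (Erd\H{o}s--Littlewood--Offord; no local limit theorem beats $1/\sqrt{\mathrm{Var}}$, and the variance is at most $O(d)$ — indeed with only $\Theta(\sqrt{d})$ swapped edges per vertex, as you propose, it is $\Theta(\sqrt d)$ and the bound degrades to $\Theta(d^{-1/4})$). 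Meanwhile the event $A^i_{uv}$ depends on the randomness at all edges incident to $u$ or $v$, hence is dependent on $\Theta(d^2)$ other such events (all edges with an endpoint in $N(u)\cup N(v)$), and even the most optimistic reduction of the dependency degree to $\Theta(d)$ leaves a product $d\cdot d^{-1/2}\to\infty$. The Local Lemma is therefore inapplicable no matter how constants are tuned; the factor-of-three saving from ``each edge obstructs only one class'' is irrelevant at this scale. You acknowledge this tension but offer no mechanism that resolves it, and within this framework there is none.

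The missing idea — used in~\cite{LocalIrreg_1} and in the present paper's proof of Theorem~\ref{MainResult-4-decomposability_of_regular} — is to make adjacent degrees differ \emph{deterministically} rather than merely with probability $1-O(1/\sqrt d)$. One randomly colours the \emph{vertices} with a palette of size $K=\Theta(d)$ (small relative to $d$), and then invokes the Addario-Berry--Dalal--McDiarmid--Reed--Thomason theorem in its full strength: it yields a subgraph with degrees in the wide window $[\frac{d(v)}{3},\frac{2d(v)}{3}]$ \emph{and} congruent to a prescribed target $t(v)$ or $t(v)+1$ modulo $\lambda_v$. Choosing $t(v)$ from the vertex colour forces adjacent, differently coloured vertices to have different degrees outright, with no anti-concentration step at all. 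Probability then enters only to guarantee sparseness properties of the exceptional sets (monochromatic or nearly-monochromatic neighbourhoods), where the bad events have exponentially small probability by Chernoff/McDiarmid and the Local Lemma applies comfortably against a polynomial dependency degree. Your appeal to the degree-constrained subgraph theorem keeps only its weak interval conclusion and discards the modular clause, which is precisely the part that carries the argument.
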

This result was further extended towards all graphs $G$ with minimum degree $\delta(G)\geq 10^{10}$ in~\cite{LocalIrreg_2}. 
Exploiting in particular this theorem, Bensmail, Merker and Thomassen~\cite{BensmailMerkerThomassen}
proved next that each connected graph outside  $\mathfrak{T}'$ is decomposable to at most $328$ locally irregular subgraphs. The bound of $328$ was also latter refined to $220$ in~\cite{LocalIrreg_Cubic}, see e.g.~\cite{LocalIrreg_1,BaudonBensmailSopena-loc_irr_compl,BensmailDrossNisse-loc_irreg,LeiLianShiZhao-loc_irreg,LintzmayerEtAl-loc_irreg,LocalIrreg_Cubic,SedlarSkrekovski-LocIrreg2}
for other related results concerning locally irregular graphs.

Surprisingly, quite recently a single connected exception to Conjecture~\ref{LIDConjecture1} was found 
by Sedlar and \v{S}krekovski~\cite{SedlarSkrekovski-LocIrreg}. This is decomposable to $4$ locally irregular subgraphs. Consequently, they posed the following less restrictive variant of the conjecture.
\begin{conjecture}[\cite{SedlarSkrekovski-LocIrreg}]\label{LIDConjecture2}
Each connected graph $G$ which does not belong to $\mathfrak{T}'$ is decomposable to $4$ locally irregular subgraphs.
\end{conjecture}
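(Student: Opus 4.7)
The plan is to prove Conjecture~\ref{LIDConjecture2} by splitting the class of connected graphs outside $\mathfrak{T}'$ into a \emph{high-minimum-degree} regime (minimum degree at least some absolute constant $\delta_0$) and a \emph{bounded-maximum-degree} regime (maximum degree at most some $\Delta_0\ge\delta_0$), so that the two cases together exhaust all possibilities. The constants $\delta_0$ and $\Delta_0$ should be chosen so that the two techniques below meet in the middle.

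In the high-degree regime I would build on the trajectory initiated by Theorem~\ref{Theorem107} and extended to $\delta(G)\ge 10^{10}$ in~\cite{LocalIrreg_2}, aiming to push $\delta_0$ down to whatever value the bounded-degree analysis can absorb. Concretely, I would employ a numerically optimised random edge-partition into three (or four) classes, use Chernoff--Azuma concentration to ensure that in every class each vertex has degree very close to its expectation, and then apply the Addario-Berry--Dalal--McDiarmid--Reed--Thomason and Alon--Wei degree-constrained subgraph theorems to locally perturb the partition so that adjacent vertices attain distinct degrees in each part. A decomposition into $3$ locally irregular subgraphs is stronger than needed here and trivially yields a $4$-decomposition.

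In the bounded-degree regime I would proceed by strong induction on $|V(G)|+|E(G)|$, seeking in each non-trivial graph a \emph{reducible configuration}: a small subgraph whose removal leaves a graph (or disjoint union of graphs) still outside $\mathfrak{T}'$ to which the inductive hypothesis applies, and whose reinsertion can be carried out by adjusting at most four pre-existing classes. Natural candidates are pendant edges, short induced paths joining low-degree vertices, and sufficiently long ears in an ear decomposition. For the remaining \emph{irreducible} graphs, which must be highly structured, I would appeal to the explicit description of $\mathfrak{T}'$ in~\cite{LocalIrreg_1} and carry out a finite case check, verifying in particular that the Sedlar--\v{S}krekovski exception and its close relatives all admit a $4$-decomposition (with computer assistance up to some explicit order, if necessary).

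The main obstacle will be the bounded-degree regime, and more specifically the reinsertion step. The known counterexample to Conjecture~\ref{LIDConjecture1} shows that three classes do not suffice, so the customary ``open one more class'' trick is already on the edge of what is possible; maintaining the invariant that at most four classes are ever used, while ensuring that every newly created adjacency remains locally irregular, will require a delicate exchange argument along the lines of those in~\cite{BensmailMerkerThomassen,LocalIrreg_Cubic} but carefully tightened so that only $4$ rather than $220$ classes are opened. I expect the core technical tool to be an ``augmentation'' lemma: given any $4$-decomposition of $G-H$ for a reducible configuration $H$ and any admissible target degree sequence on the attachment vertices of $H$, the decomposition can be modified within $O(|H|)$ edges to realise that target, making the extension to $G$ essentially automatic.
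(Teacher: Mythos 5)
The statement you are trying to prove is a \emph{conjecture}: the paper does not prove it, and it remains open. What the paper actually establishes is only the special case of $d$-regular graphs with $d\geq 54000$ (Theorem~\ref{MainResult-4-decomposability_of_regular}), via a random vertex-colouring, the Local Lemma, and the degree-constrained-subgraph results of Addario-Berry et al.\ and Alon--Wei. So your proposal cannot be ``compared against the paper's proof'' --- there is none --- and it must be judged as a standalone argument, where it is a research programme rather than a proof.

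As such a programme it has a concrete logical flaw and two large unfilled gaps. The flaw: your two regimes, ``minimum degree at least $\delta_0$'' and ``maximum degree at most $\Delta_0$'', do \emph{not} exhaust all connected graphs. A graph can simultaneously contain a vertex of degree $1$ and a vertex of degree $10^{100}$; it then lies in neither regime for any choice of constants. This is precisely the hard case, and it is why the existing literature (\cite{BensmailMerkerThomassen,LocalIrreg_Cubic}) works with a more delicate decomposition into a high-degree part and a degenerate part and still only reaches $220$ classes. The first gap: in the high-degree regime, the known results are a $3$-decomposition for $\delta(G)\geq 10^{10}$ and (from this paper) a $4$-decomposition for \emph{regular} graphs with $d\geq 54000$; there is no known route to an absolute constant $\delta_0$ small enough to meet a finite case analysis, and asserting that the bound can be ``pushed down'' is not an argument. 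The second gap: in the bounded-degree regime you name no actual reducible configurations, give no reason the irreducible graphs form a finite or tractable family, and state the ``augmentation lemma'' only as a hope; the Sedlar--\v{S}krekovski counterexample to Conjecture~\ref{LIDConjecture1} shows exactly that such local reinsertion arguments can fail, so this step cannot be waved through.
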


As the main contribution of this paper we provide the following significant reduction of the upper bound $10^7$ from Theorem~\ref{Theorem107} in context of the new Conjecture~\ref{LIDConjecture2}. In a way this concerns the most challenging in face of our goals family of graphs, which are extremely far from being (locally) irregular.
\begin{theorem}\label{MainResult-4-decomposability_of_regular}
Each $d$-regular graph with $d\geq 54000$ is decomposable to $4$ locally irregular subgraphs.
\end{theorem}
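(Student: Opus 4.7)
The plan is to combine a probabilistic four-colouring of the edges with deterministic adjustments based on the degree-constrained subgraph theorems cited in the abstract.

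Let $G$ be $d$-regular with $d\geq 54000$ and $V=V(G)$. I would begin by colouring each edge of $G$ independently and uniformly with one of four colours, forming subgraphs $G_1,\ldots,G_4$. Each $d_{G_i}(v)$ is then binomial with mean $d/4$, so Chernoff bounds keep it in a window of width $O(\sqrt{d\log d})$ around $d/4$ with very high probability. Applying the symmetric Lov\'asz Local Lemma to the family of bad events --- namely, degrees falling outside such windows, and adjacent vertices having identical degrees in some $G_i$ --- I would try to show that, with positive probability, each colour class is simultaneously ``almost regular'' at scale $d/4$ and has only $O(1)$ degree collisions across adjacent pairs per vertex. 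Because dependencies among bad events at vertices far apart in $G$ vanish, the LLL condition reduces to a polynomial-in-$d$ inequality, which is the source of the explicit numerical threshold.

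Next, I would repair the surviving collisions deterministically. The Addario-Berry--Dalal--McDiarmid--Reed--Thomason theorem provides, inside any graph, spanning subgraphs whose degree sequence approximates any prescribed weight function up to a small additive error; the Alon--Wei theorem supplies subgraphs of regular graphs realising many mutually distinct degree values. Combining these, for each colliding adjacent pair $uv$ in some $G_i$ I would shift a bounded number of edges between $G_i$ and a second colour class so as to separate $d_{G_i}(u)$ from $d_{G_i}(v)$ while keeping both within the concentration window. Since each such local swap alters degrees of only a few vertices by at most a constant, and the concentration window is far wider than this perturbation, no new collisions can propagate uncontrollably.

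The main obstacle is the numerical optimisation that pushes the constant down to $54000$ rather than something much larger. Three quantities must be balanced simultaneously: the Chernoff deviation $t\sim\sqrt{d\log d}$, the dependency degree roughly $d^2$ entering the LLL, and the constant additive error from the deterministic correction lemmas. All three have to be tightened carefully --- for instance by using an asymmetric LLL, or by bundling several degree constraints into a single bad event --- so that the resulting inequality is already satisfied at $d=54000$. I expect this final balancing act, rather than any individual step, to be the delicate part of the argument.
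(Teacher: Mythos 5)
Your plan has a genuine gap at its core, and it is not the route the paper takes. If you colour each edge independently with one of four colours, then $d_{G_i}(v)$ is binomial with mean $d/4$ and standard deviation $\Theta(\sqrt{d})$, so for an adjacent pair $u,v$ the probability that $d_{G_i}(u)=d_{G_i}(v)$ is of order $1/\sqrt{d}$. Each vertex therefore has $\Theta(\sqrt{d})$ colliding neighbours per colour class in expectation, not $O(1)$, and no form of the Local Lemma will push this down to a constant: with $d$ neighbours whose degrees all lie in a window of width $O(\sqrt{d\log d})$, pigeonhole forces many neighbours to share each degree value. This also dooms the repair step: every local swap that separates one colliding pair moves a degree to another value inside the same crowded window, typically creating new collisions with other neighbours, and you give no mechanism to prevent this cascade. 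You have also misremembered the two cited tools: Lemma~\ref{Alon-Wei_lemma} is a rounding statement (integer $0$--$1$ edge values whose vertex sums track prescribed fractional sums within $1$), and the strength of Lemma~\ref{1_6Lemma} is the \emph{modular} constraint $d_H(v)\equiv t(v)$ or $t(v)+1 \pmod{\lambda_v}$, neither of which supports ``shifting a bounded number of edges to separate two specific degrees.''

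The paper avoids the collision-repair problem entirely by randomizing over \emph{vertices} rather than edges: each vertex $v$ receives a random pair $(O_v,I_v)\in[K]^2$ with $K\approx 0.025d$, the graph is split into $H_0,H_1$ (using Lemma~\ref{Alon-Wei_lemma} to keep the split balanced), and then Lemma~\ref{1_6Lemma} is applied to force $d_{F_1}(v)\equiv 2O_v$ or $2O_v+1 \pmod{2K}$ inside $H_0$. Adjacent vertices with $O_v\neq O_w$ then automatically get distinct degrees in $F_1$, and the complementary subgraph $F_2$ is handled by first removing the ``risky'' edges whose endpoints have residues too close modulo $K$. The few vertices where the random colouring fails (same colour on adjacent vertices) are treated greedily. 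In short, the degree separation is engineered in advance through residue classes rather than repaired after the fact; your proposal would need a fundamentally different idea to control the $\Theta(\sqrt{d})$ collisions per vertex that an edge-random four-colouring produces.
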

Let us remark that our approach is  significantly different from the one used in~\cite{LocalIrreg_1} to prove Theorem~\ref{Theorem107}. It benefits in particular, but not exclusively, from the  allowed one more locally irregular subgraph, which admitted an entirely new design of a desired decomposition.
Its first phase exploits the probabilistic method in order to initially decompose a given regular graph to $2$ subgraphs with relatively small chromatic numbers, where we put some effort to optimize our approach with regard to minimizing the lower bound for $d$ in Theorem~\ref{MainResult-4-decomposability_of_regular}. In the second, deterministic part of our argument we also make use of results stemming from the mentioned related problems, i.e. certain theorems on degree constrained subgraphs of Addario-Berry, Dalal, McDiarmid, Reed, and Thomason~\cite{Louigi30}, and of Alon and Wei~\cite{Alon-Wei}.
We believe the applied strategy is quite interesting by itself. 

In the second section we shall outline the main ideas of the proof of Theorem~\ref{MainResult-4-decomposability_of_regular}. Next we shall present several useful tools. Section~\ref{Proof_Main_Section} contains the complete proof of our main result. The last section includes concluding remarks.

\section{Outline of the main ideas} 

The main vague idea of the proof of Theorem~\ref{MainResult-4-decomposability_of_regular} is the following.
In the beginning we shall partition the edges of a given $d$-regular graph $G=(V,E)$ to two subsets. Each of these shall induce a graph $H_i$, $i=0,1$, with several identical features. 
First of all each $H_i$ shall be close to a $\frac{d}{2}$-regular graph, but shall have relatively small chromatic number. For this purpose we shall randomly associate to every vertex $v\in V$ a pair of integers $(O_v,I_v)\in [K]^2$, called its colour, where $K$ is small compared to $d$ and $[K]=\{1,2,\ldots,K\}$. Than we shall colour the edges accordingly with $0$ and $1$, following several rules -- the edges coloured $0$ shall induce $H_0$ and the ones coloured $1$ shall form $H_1$. 
The values $O_v$ shall in fact associate the vertices with independent sets in $H_0$ (witnessing its relatively small chromatic number) and the values $I_v$ shall play the same role for $H_1$. In order to maintain a better error control, the aforementioned $0$-$1$-edge partition shall be partly performed deterministically, using Lemma~\ref{Alon-Wei_lemma} of Alon and Wei. 
More specifically, the random vertex colours assignment shall be mainly used to provide large independent sets in $H_0$ and $H_1$, while the rest of the required features of $H_0$, $H_1$ shall be assured due to Lemma $9$, providing almost perfectly equal distribution of colours $0$ and $1$ in certain distinguished  subsets of edges.

Then each of the graphs $H_i$, $i=0,1$, shall be further decomposed into two subgraphs, say $F_1$ and $F_2$. These, unlike $H_0$ and $H_1$, shall have different sets of properties and features, 
which in varied ways shall imply that $F_1$, $F_2$ are locally irregular.
 The degrees in $F_1$ shall be determined by $O_v$ or $I_v$. More precisely, these shall be (almost) equal to twice one of these values modulo $2K$, and thus shall be different for adjacent vertices from different independent sets mentioned above. In order to construct such $F_1$ we shall use Lemma~\ref{1_6Lemma}, concerning existence in every graph very restrictively degree constrained  subgraphs, which essentially was 
developed by Addario-Berry, Dalal, McDiarmid, Reed and Thomason while conducting research over the 1--2--3 Conjecture. In order to assure that the complement of $F_1$ in $H_i$, that is $F_2$, is locally irregular as well, we shall on the other hand first identify potentially risky edges from the point of view of $F_2$. 
  All of these shall be included in $F_1$ prior to using Lemma~\ref{1_6Lemma}, which shall allow us to avoid degree conflicts in $F_2$. 

As the values $(O_v,I_v)$ shall be chosen randomly and independently for every $v\in V$, some pairs of adjacent vertices shall receive identical colours. If this occurs we shall uncolour both of them. We shall however show that the set $U$ of such uncolored vertices is relatively sparse, and thus we shall be able to handle them separately, using a natural greedy approach. In fact these vertices shall be dealt with before the remaining ones.

\section{Tools}

We shall use several tools useful while applying the probabilistic method.
In particular some concentration inequalities: 
the well-known Chernoff Bound and the McDiarmid's Inequality, cf. e.g.~\cite{FriezeKaronski}.
\begin{theorem}[\textbf{Chernoff Bound}]
Let $X_1,X_2,\ldots,X_n$ be independent random variables taking values in $\{0,1\}$. Let $X=\sum_{i=1}^nX_i$ and $\mu=\mathbb{E}(X)$. Then for all $\delta\geq 0$ we have
$$\mathbb{P}\left(X\geq (1+\delta)\mu\right)\leq \left(\frac{e^\delta}{(1+\delta)^{1+\delta}}\right)^\mu.$$
\end{theorem}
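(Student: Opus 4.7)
The plan is to prove this standard upper tail estimate via the exponential moment method (the Bernstein/Chernoff trick): apply Markov's inequality to a positive exponential of $X$ and then optimize a free parameter at the end.

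First, for any $t>0$, since $x\mapsto e^{tx}$ is strictly increasing and positive, Markov's inequality yields
$$\mathbb{P}\bigl(X\geq (1+\delta)\mu\bigr) \;=\; \mathbb{P}\bigl(e^{tX}\geq e^{t(1+\delta)\mu}\bigr) \;\leq\; \frac{\mathbb{E}\bigl[e^{tX}\bigr]}{e^{t(1+\delta)\mu}}.$$
So the problem reduces to controlling the moment generating function $\mathbb{E}[e^{tX}]$.

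Next I would use the independence of the $X_i$ to factor $\mathbb{E}[e^{tX}]=\prod_{i=1}^n \mathbb{E}[e^{tX_i}]$. Writing $p_i=\mathbb{P}(X_i=1)$, a direct evaluation gives $\mathbb{E}[e^{tX_i}]=1+p_i(e^t-1)$, which by the elementary estimate $1+x\leq e^x$ (valid for all real $x$) is at most $\exp\bigl(p_i(e^t-1)\bigr)$. Multiplying over $i$ and using $\sum_i p_i=\mu$ yields $\mathbb{E}[e^{tX}]\leq \exp\bigl((e^t-1)\mu\bigr)$. Combining with the Markov step produces, for every $t>0$, the one-parameter family of bounds
$$\mathbb{P}\bigl(X\geq (1+\delta)\mu\bigr) \;\leq\; \exp\Bigl(\mu\bigl[(e^t-1)-t(1+\delta)\bigr]\Bigr).$$

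The last step is to tune $t$ to minimize the exponent. Differentiating the exponent in $t$ gives the optimizer $t=\ln(1+\delta)$, which is a legitimate positive choice for all $\delta>0$; the degenerate case $\delta=0$ is trivial because the right-hand side of the stated inequality equals $1$. Plugging this $t$ into the previous display, the exponent collapses to $\mu\bigl[\delta-(1+\delta)\ln(1+\delta)\bigr]$, and exponentiating back produces exactly $\bigl(e^\delta/(1+\delta)^{1+\delta}\bigr)^\mu$, as required. The argument is essentially mechanical; the only genuinely substantive choice is that of the exponential tilt $t$, which is why I would keep $t$ free throughout and optimize only at the very end. No real obstacle arises beyond being careful that the optimal $t$ is admissible (strictly positive) for $\delta>0$ and handling $\delta=0$ separately.
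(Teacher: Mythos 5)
Your proof is correct: it is the standard exponential moment (Markov plus tilting) argument, with the factorization over independent $X_i$, the bound $1+x\leq e^x$, and the optimal choice $t=\ln(1+\delta)$ all applied properly, and the degenerate case $\delta=0$ handled. The paper itself does not prove this statement --- it quotes the Chernoff Bound as a standard tool with a citation --- so there is nothing to compare against; your argument is exactly the textbook proof one would supply.
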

Note the same upper bound holds if we only know that  $\mu\geq \mathbb{E}(X)$.
\begin{theorem}[\textbf{McDiarmid's Inequality}]\label{McDiarmidTheorem}
Let $Z = Z(W_1,W_2,\ldots,W_n)$ be a ran\-dom variable that depends on $n$ independent random variables $W_1, W_2,$ $\ldots,$ $W_n$. Suppose
that
$$|Z(W_1,\ldots,W_i,\ldots,W_n)-Z(W_1,\ldots,W'_i,\ldots,W_n)| \leq c_i$$
for all $i = 1,2,\ldots,n$ and $W_1, W_2, \ldots, W_n, W'_i$. Then for all $t > 0$ we have
$$\mathbb{P}\left(Z \geq \mathbb{E}(Z)+t\right) \leq e^{-\frac{t^2}{2\sum_{i=1}^n c_i^2}}.$$
\end{theorem}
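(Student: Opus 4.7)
The plan is to prove McDiarmid's Inequality by the standard Doob-martingale construction combined with Hoeffding's bound on moment generating functions and the exponential Markov (Chernoff) trick.

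I would first introduce the Doob martingale of $Z$. Set $\mathcal{F}_i=\sigma(W_1,\ldots,W_i)$ with $\mathcal{F}_0$ trivial, and let $M_i=\mathbb{E}[Z\mid\mathcal{F}_i]$, so that $M_0=\mathbb{E}(Z)$ and $M_n=Z$. Writing $D_i=M_i-M_{i-1}$ for the martingale differences, we obtain the telescoping identity $Z-\mathbb{E}(Z)=\sum_{i=1}^n D_i$ and the property $\mathbb{E}[D_i\mid\mathcal{F}_{i-1}]=0$.

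The main structural step is to show that $|D_i|\leq c_i$ almost surely. By the independence of $W_1,\ldots,W_n$, for any fixed $w_1,\ldots,w_i$ we have
$$\mathbb{E}[Z\mid W_1=w_1,\ldots,W_i=w_i]=\mathbb{E}\bigl[Z(w_1,\ldots,w_i,W_{i+1},\ldots,W_n)\bigr].$$
The bounded-differences hypothesis then transfers to this conditional expectation: replacing the $i$-th coordinate $w$ by $w'$ alters the integrand pointwise by at most $c_i$, hence shifts the integral by at most $c_i$. Consequently, conditional on $\mathcal{F}_{i-1}$, the random variable $M_i$ lies in an interval of length at most $c_i$, and since $M_{i-1}=\mathbb{E}[M_i\mid\mathcal{F}_{i-1}]$ belongs to the closed convex hull of that same set, we conclude $|D_i|\leq c_i$ almost surely.

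From here I would invoke Hoeffding's lemma applied conditionally on $\mathcal{F}_{i-1}$: any zero-mean random variable supported in $[-c_i,c_i]$ satisfies $\mathbb{E}[e^{\lambda D_i}\mid\mathcal{F}_{i-1}]\leq e^{\lambda^2 c_i^2/2}$ for every $\lambda>0$. Iterating the tower property one index at a time gives
$$\mathbb{E}\bigl[e^{\lambda(Z-\mathbb{E}Z)}\bigr]=\mathbb{E}\Bigl[e^{\lambda\sum_{i<n}D_i}\,\mathbb{E}\bigl[e^{\lambda D_n}\mid\mathcal{F}_{n-1}\bigr]\Bigr]\leq \cdots\leq \exp\!\Bigl(\frac{\lambda^2}{2}\sum_{i=1}^n c_i^2\Bigr).$$
Markov's inequality then yields $\mathbb{P}(Z-\mathbb{E}Z\geq t)\leq \exp\!\bigl(-\lambda t+\tfrac{\lambda^2}{2}\sum_i c_i^2\bigr)$, and optimizing in $\lambda$ by the choice $\lambda=t/\sum_i c_i^2$ produces exactly the claimed bound $e^{-t^2/(2\sum_i c_i^2)}$.

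I expect the main obstacle to lie in the boundedness step for $D_i$: while intuitively clear, transferring the pointwise bound on $Z$ into a pointwise bound on the conditional differences crucially uses independence, which is what allows the abstract conditional expectation to be rewritten as an integral against the product law of $(W_{i+1},\ldots,W_n)$ so that the $c_i$ bound passes through. Everything else — Hoeffding's lemma and the Chernoff optimization — is routine once that step is in place.
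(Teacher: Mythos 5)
The paper does not prove this statement; it is quoted as a standard concentration tool with a citation to Frieze and Karo\'nski. Your argument is the standard Doob-martingale derivation (bounded differences via independence, conditional Hoeffding's lemma, Chernoff optimization) and it is correct; it yields exactly the constant $e^{-t^2/(2\sum_i c_i^2)}$ stated here. (As a side remark, your own observation that, conditionally on $\mathcal{F}_{i-1}$, the variable $M_i$ lies in an interval of length $c_i$ rather than merely in $[-c_i,c_i]$ would, fed into Hoeffding's lemma, give the sharper and more usual form $e^{-2t^2/\sum_i c_i^2}$; the weaker bound you derive is all that the paper's statement claims and all that its application requires.)
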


We shall also need a variant of the symmetric Lov\'asz Local Lemma, involving conditional probabilities of bad events, cf. e.g.~\cite{AlonSpencer} (Corollary 5.1.2 and comments beneath its proof).
\begin{theorem}[\textbf{Lov\'asz Local Lemma}]
\label{LLL-symmetric-lopsided}
Let $\mathcal{A}=\{A_1,A_2,\ldots,A_n\}$ be a family of 
events in an arbitrary pro\-ba\-bi\-li\-ty space.
Suppose that for every $i\in [n]$ there is a set $\Gamma(A_i)\subset \mathcal{A}$ of size at most $D$ such that for each $\mathcal{B}\subseteq \mathcal{A}\smallsetminus (\Gamma(A_i)\cup \{A_i\})$: 
$$ \mathbb{P}\left(A_i|\bigcap_{B\in\mathcal{B}}\overline{B}\right)  \leq \frac{1}{e(D+1)}.$$
Then $ \mathbb{P}\left(\bigcap_{i=1}^n\overline{A_i}\right)>0$.
\end{theorem}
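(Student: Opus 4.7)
The plan is to reduce the theorem to a single inductive claim: for every index $i\in[n]$ and every subset $S\subseteq [n]\setminus\{i\}$,
$$ \mathbb{P}\left(A_i \,\Big|\, \bigcap_{j\in S}\overline{A_j}\right) \leq \frac{1}{D+1}. $$
Once this is established, the conclusion follows immediately from the chain rule
$$ \mathbb{P}\left(\bigcap_{i=1}^n \overline{A_i}\right) = \prod_{i=1}^n \mathbb{P}\left(\overline{A_i} \,\Big|\, \bigcap_{j<i}\overline{A_j}\right) \geq \left(\frac{D}{D+1}\right)^n > 0, $$
so the whole task reduces to proving the boxed inequality above.

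I would argue by induction on $|S|$. The base case $|S|=0$ is immediate: applying the hypothesis of the theorem with $\mathcal{B}=\emptyset$ gives $\mathbb{P}(A_i)\leq \frac{1}{e(D+1)}\leq \frac{1}{D+1}$. For the inductive step, I split $S=S_1\cup S_2$ disjointly, where $S_1=\{j\in S:A_j\in \Gamma(A_i)\}$ and $S_2=S\setminus S_1$, and list $S_1=\{j_1,\ldots,j_k\}$ with $k\leq D$. I then express the conditional probability as a ratio:
$$ \mathbb{P}\left(A_i \,\Big|\, \bigcap_{j\in S}\overline{A_j}\right) = \frac{\mathbb{P}\left(A_i\cap \bigcap_{l=1}^k \overline{A_{j_l}} \,\Big|\, \bigcap_{j\in S_2}\overline{A_j}\right)}{\mathbb{P}\left(\bigcap_{l=1}^k \overline{A_{j_l}} \,\Big|\, \bigcap_{j\in S_2}\overline{A_j}\right)}. $$
The numerator is bounded by $\mathbb{P}(A_i\,|\,\bigcap_{j\in S_2}\overline{A_j})\leq \frac{1}{e(D+1)}$, which is a valid application of the theorem's hypothesis since $\{A_j:j\in S_2\}$ is disjoint from $\Gamma(A_i)\cup\{A_i\}$.

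For the denominator, I expand via the chain rule as
$$ \prod_{l=1}^k \mathbb{P}\left(\overline{A_{j_l}} \,\Big|\, \bigcap_{j\in S_2\cup\{j_1,\ldots,j_{l-1}\}}\overline{A_j}\right), $$
observing that each conditioning set has strictly fewer than $|S|$ elements and does not contain $j_l$. The induction hypothesis therefore bounds $\mathbb{P}(A_{j_l}\,|\,\cdots)\leq \frac{1}{D+1}$, so each factor is at least $\frac{D}{D+1}$, and the whole product is at least $\left(\frac{D}{D+1}\right)^D\geq \frac{1}{e}$. Dividing, I arrive at $\mathbb{P}(A_i\,|\,\bigcap_{j\in S}\overline{A_j})\leq \frac{1/(e(D+1))}{1/e}=\frac{1}{D+1}$, closing the induction.

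There is no serious obstacle here beyond careful bookkeeping of conditional probabilities. The three items that need attention are: (i) verifying that $\{A_j:j\in S_2\}$ is disjoint from $\Gamma(A_i)\cup\{A_i\}$, so the theorem's hypothesis can be invoked for the numerator; (ii) ensuring that each conditioning set $S_2\cup\{j_1,\ldots,j_{l-1}\}$ in the expansion of the denominator has size strictly less than $|S|$ and omits $j_l$, so that the induction hypothesis legitimately applies; and (iii) the elementary inequality $(1-\frac{1}{D+1})^D \geq \frac{1}{e}$, which is precisely what converts the hypothesis-bound $\frac{1}{e(D+1)}$ into the inductive bound $\frac{1}{D+1}$ and is the reason the constant $e$ appears in the statement.
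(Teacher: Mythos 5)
This theorem is not proved in the paper at all --- it is quoted as a known tool, with a pointer to Alon and Spencer (Corollary 5.1.2 and the comments beneath its proof), and your argument is precisely the standard inductive proof given there: induction on $|S|$ establishing $\mathbb{P}\bigl(A_i \mid \bigcap_{j\in S}\overline{A_j}\bigr)\le \frac{1}{D+1}$ via the numerator/denominator split along $\Gamma(A_i)$, followed by the chain rule. Your proof is correct; the only points worth making explicit are that the positivity of $\mathbb{P}\bigl(\bigcap_{j\in S}\overline{A_j}\bigr)$ must be carried along in the same induction so that all the conditional probabilities are defined, and that in the degenerate case $D=0$ the final bound $\bigl(\frac{D}{D+1}\bigr)^n$ is vacuous, though the conclusion is then immediate from the hypothesis applied with $\mathcal{B}=\mathcal{A}\smallsetminus\{A_i\}$.
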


For the deterministic part of our proof we shall also need the following lemma from~\cite{LocalIrreg_2} (Corollary 9), concerning degree constrained subgraph of a given graph, which is a direct consequence of Theorem 2.2 due to Addario-Berry, Dalal, McDiarmid, Reed and Thomason from~\cite{Louigi30}
(see also \cite{Louigi2,Louigi} for similar degree theorems and their applications).
\begin{lemma} \label{1_6Lemma}
Suppose that for some graph $G=(V,E)$ with minimum degree at least $12$ we have chosen, for every
vertex $v$, an integer $\lambda_v\geq 2$ with $6\lambda_v \leq d(v)$. Then for every assignment
$$t:V\to \mathbb{Z},$$
there exists a spanning subgraph $H$ of $G$ such that $d_H(v)\in[\frac{d(v)}{3},\frac{2d(v)}{3}]$ and
$d_H(v)\equiv t(v) \pmod {\lambda_v}$ or $d_H(v)\equiv t(v)+1 \pmod {\lambda_v}$ for each $v\in V$.
\end{lemma}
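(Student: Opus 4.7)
The plan is to derive the lemma as an essentially immediate consequence of Theorem 2.2 of Addario-Berry, Dalal, McDiarmid, Reed and Thomason~\cite{Louigi30}. That theorem guarantees, for a graph $G$ in which every vertex $v$ is equipped with a list $L(v)\subseteq\{0,1,\ldots,d(v)\}$ containing two consecutive integers $\{a_v,a_v+1\}$ sufficiently far from both endpoints of $\{0,\ldots,d(v)\}$, the existence of a spanning subgraph $H$ with $d_H(v)\in L(v)$ for every $v$. My intention is to translate the modular conditions of the present statement into such a list assignment and to invoke that theorem once, without any additional probabilistic or combinatorial ingredient.

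For each $v\in V$ I would set $L(v):=\{k_v,k_v+1\}$, where $k_v$ is an integer chosen with
$$k_v\in[\lceil d(v)/3\rceil,\lfloor 2d(v)/3\rfloor-1]\quad\text{and}\quad k_v\equiv t(v)\pmod{\lambda_v}.$$
Such a $k_v$ exists because the integer interval $[\lceil d(v)/3\rceil,\lfloor 2d(v)/3\rfloor-1]$ has length at least $d(v)/3-2\geq 2\lambda_v-2\geq \lambda_v$ by the hypothesis $6\lambda_v\leq d(v)$ combined with $\lambda_v\geq 2$, and therefore meets every residue class modulo $\lambda_v$. Since $\lambda_v\geq 2$ the residues $t(v)$ and $t(v)+1$ are distinct modulo $\lambda_v$, so any outcome $d_H(v)\in\{k_v,k_v+1\}$ automatically yields $d_H(v)\equiv t(v)$ or $d_H(v)\equiv t(v)+1\pmod{\lambda_v}$, together with $d_H(v)\in[d(v)/3,2d(v)/3]$.

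Invoking Theorem 2.2 of~\cite{Louigi30} on the lists $L(v)=\{k_v,k_v+1\}$ then produces the required spanning subgraph $H$. The quantitative hypotheses of that theorem -- essentially that the two admissible degrees $k_v,k_v+1$ sit at a controlled distance from $0$ and from $d(v)$, and that $d(v)$ itself is bounded below by an absolute constant -- are comfortably satisfied: the two chosen values lie in the middle third of $[0,d(v)]$, so each is at distance at least $d(v)/3\geq 4$ from either extreme, while $d(v)\geq 6\lambda_v\geq 12$ matches the stated minimum-degree assumption. One can read the hypothesis $6\lambda_v\leq d(v)$ as calibrated precisely so that an interval of length $d(v)/3$ leaves room for two consecutive integers in a prescribed arithmetic progression with common difference $\lambda_v$.

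The only delicate point I anticipate is the bookkeeping required to align the exact formulation of Theorem 2.2 of~\cite{Louigi30} -- which is phrased in absolute rather than fractional terms -- with the normalization $[d(v)/3,2d(v)/3]$ appearing here. There is no genuine combinatorial obstacle: once the lists $L(v)$ are built, the entire content of the lemma reduces to verifying that the pair $\{k_v,k_v+1\}$ is an admissible input for the cited theorem, which is exactly what the conditions $\lambda_v\geq 2$, $6\lambda_v\leq d(v)$ and $\delta(G)\geq 12$ are designed to guarantee.
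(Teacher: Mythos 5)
There is a genuine gap, and it sits exactly in the step you defer as ``bookkeeping''. Theorem 2.2 of Addario-Berry, Dalal, McDiarmid, Reed and Thomason~\cite{Louigi30} does not let you prescribe a single pair of consecutive integers $\{k_v,k_v+1\}$ placed at an arbitrary location in the middle third of $\{0,\ldots,d(v)\}$. What it actually provides is the following: given \emph{two} anchors $a_v^-\in[\frac{d(v)}{3},\frac{d(v)}{2}]$ and $a_v^+\in[\frac{d(v)}{2},\frac{2d(v)}{3}]$, subject to a balance condition requiring the distances of $a_v^-$ and $a_v^+$ from $d(v)/2$ to be comparable (within a constant factor of each other), there is a spanning subgraph with $d_H(v)\in\{a_v^-,a_v^-+1,a_v^+,a_v^++1\}$ --- a four-element admissible set whose two halves straddle $d(v)/2$, with no control over which half the degree falls into. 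A two-element window at an arbitrary position in the middle third is a much stronger assertion that does not appear in~\cite{Louigi30}, so your single invocation of the theorem does not go through as written. Your own arithmetic hints at the misreading: under your version the hypothesis $3\lambda_v\leq d(v)-O(1)$ would already suffice, whereas the factor $6$ in $6\lambda_v\leq d(v)$ is calibrated so that \emph{each} of the two intervals $[\frac{d(v)}{3},\frac{d(v)}{2}]$ and $[\frac{d(v)}{2},\frac{2d(v)}{3}]$, of length $d(v)/6\geq\lambda_v$, separately contains a representative of the residue class of $t(v)$ modulo $\lambda_v$.

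For the record, the paper does not reprove this statement at all: it quotes it as Corollary 9 of~\cite{LocalIrreg_2}, where the reduction to Theorem 2.2 of~\cite{Louigi30} is carried out by choosing \emph{both} anchors $a_v^-$ and $a_v^+$ congruent to $t(v)$ modulo $\lambda_v$, one on each side of $d(v)/2$, and verifying that representatives can be selected so that the balance condition holds; all four admissible degrees then lie in $[\frac{d(v)}{3},\frac{2d(v)}{3}]$ and are congruent to $t(v)$ or $t(v)+1$ modulo $\lambda_v$, which is precisely why the conclusion is stated with the disjunction $t(v)$ or $t(v)+1$. Your overall strategy --- encode the congruence into the prescribed degree values and then apply the degree-constrained subgraph theorem --- is the right one, but to close the argument you must work with the two-anchor formulation and check the balance condition for your chosen representatives; that verification is the actual content of the reduction and is missing from your proposal.
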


We shall also use the following lemma due to Alon and Wei~\cite{Alon-Wei} (see Lemma 4.1).
\begin{lemma}\label{Alon-Wei_lemma} Let $G = (V, E)$ be a graph, and let $z : E \to [0, 1]$ be a weight function assigning to each edge $e \in E$ a real weight $z(e)$ in $[0, 1]$. Then there is a function $x : E \to
\{0,1\}$ assigning to each edge an integer value in $\{0,1\}$ so that for every $v \in V$
$$\sum_{e\ni v}z(e) - 1 <\sum_{e\ni v}x(e)\leq \sum_{e\ni v}z(e)+1.$$  
\end{lemma}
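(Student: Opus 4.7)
The plan is to prove this lemma by iterative rounding. I shall maintain a real-valued function $y:E\to[0,1]$, initialised as $y=z$, and successively modify $y$ via local perturbations that strictly reduce the size of the fractional edge set $F=\{e:0<y(e)<1\}$ while tracking the drift $D(v)=\sum_{e\ni v}(y(e)-z(e))$ at every vertex $v$. Once $F=\emptyset$ we set $x=y$, and the bound in the statement will follow from verifying that $-1<D(v)\leq 1$ upon termination.

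The basic manoeuvre is \emph{even-cycle cancellation}: when $F$ contains an even cycle $C=e_1e_2\ldots e_{2k}$, perturb $y(e_i)\mapsto y(e_i)+(-1)^i\varepsilon$ with $\varepsilon>0$ chosen maximally so $y$ remains in $[0,1]^E$. Each vertex of $C$ then receives both $+\varepsilon$ and $-\varepsilon$ from its two incident cycle edges, so all vertex sums are preserved exactly, yet some edge is pushed to $\{0,1\}$. When $F$ contains no even cycle at all, a cycle-space argument (the symmetric difference of any two distinct cycles in a common block contains an even cycle) shows that every block of $F$ is either a bridge or a single odd cycle, so each component of $F$ is a tree with possibly one pendant odd cycle. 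For a component containing an odd cycle $O$ attached to a tree path $v_1=u_0,u_1,\ldots,u_m$ terminating at a leaf $u_m$ of $F$, I combine the alternating $\varepsilon$-perturbation around $O$, which unbalances $v_1$ by $\pm 2\varepsilon$, with a compensating alternating $2\varepsilon$-perturbation along the path, calibrated to cancel the imbalance at $v_1$ and at every internal $u_j$; the only uncompensated drift lands at $u_m$, which has just one fractional incident edge, and is bounded by $2\varepsilon<1$. For components that are cycle-free I proceed by a bottom-up tree traversal, at each internal vertex $v$ rounding the fractional edges going to its already-processed children \emph{jointly}, choosing a $\{0,1\}$-valued vector whose coordinate sum equals $\lfloor s \rfloor$ or $\lceil s \rceil$ for $s=\sum z$, so that $|D(v)|<1$ at that step.

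The main obstacle, and where careful bookkeeping is required, is ensuring that the drifts introduced in the odd-cycle-with-pendant operation do not compound at the same leaf $u_m$ over successive iterations, and that upon termination the asymmetric bound $\sum z-1<\sum x\leq \sum z+1$ is honoured on the correct side. The former can be handled by selecting the drain leaf so that the path edge incident to $u_m$ is the one driven to saturation (retiring $u_m$ from $F$ at once); failing that, by amortising a drift budget across the at most one operation in which a given vertex participates as a drain, exploiting the fact that once $u_m$ has drifted it has no remaining fractional edges to receive further drift. The asymmetric bound is settled by choosing the rounding sign in the very last $\{0,1\}$-choice at each vertex in favour of the permitted direction. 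With these details in place, a direct case analysis of the drift budget at every vertex yields the stated inequalities.
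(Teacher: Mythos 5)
First, a point of reference: the paper does not prove this statement at all. It is quoted verbatim as Lemma~4.1 of Alon and Wei \cite{Alon-Wei}, and the paper only ever invokes it with the constant function $z\equiv 1/2$, for which (as the final remarks point out) the classical Euler's theorem already suffices. Your attempt can therefore only be measured against the original Alon--Wei argument, which is indeed an iterative-rounding, cycle-cancellation proof in the same general spirit as your plan; the overall architecture you describe is the right one.

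As written, however, your proposal has two genuine gaps. First, your case analysis of the fractional subgraph $F$ omits the case in which a connected component of $F$ is a bare odd cycle, with no pendant tree and hence no leaf $u_m$ to drain the imbalance into. This is precisely the extremal configuration: a triangle with all weights $1/2$ shows that the constant $1$ cannot be improved and that the direction of the final rounding is forced (rounding all three edges down gives drift exactly $-1$ at each vertex, violating the strict lower bound, while rounding up gives $+1$, which is permitted). One standard fix is to alternate around the odd closed walk so that the two $+\varepsilon$ increments meet at a single vertex, which then absorbs a one-sided drift, and to argue that the total drift so absorbed never exceeds $+1$; some argument of this kind is needed and is absent from your text. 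Second, the entire quantitative content of the lemma --- that the accumulated drift at every vertex ends in the half-open interval $(-1,1]$ --- is deferred to ``amortising a drift budget'' and ``choosing the rounding sign in favour of the permitted direction''. Neither is yet a proof. A drain leaf $u_m$ whose incident path edge is not the one driven to saturation remains in $F$ and can serve as a drain, or as an endpoint of a later path or tree step, again, so its drift genuinely can compound; note also that the per-operation drift you bound by $2\varepsilon$ need not be less than $1$, since the path edges are perturbed by $\pm 2\varepsilon$ and only $2\varepsilon\leq 1$ is guaranteed. Moreover, in your bottom-up tree step the edges rounded at a vertex $v$ are shared with the children of $v$, so the ``favourable sign'' cannot be chosen independently at both endpoints of an edge. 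To close the argument you need an explicit invariant on the current drift of every vertex still incident to a fractional edge, maintained through all three operations; without it the final inequalities, and in particular their strict versus non-strict asymmetry, are not established.
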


Let $G=(V,E)$ be a graph and let $v,w\in V$, $V'\subseteq V$, $E'\subseteq E$. 
By $N_G(v)$ we understand the set of neighbours of $v$ in $G$.
We denote by $E'(v)$ the set of edges in $E'$ incident with $v$. 
We also define $d_{E'}(v) = |E'(v)|$, $d_{V'}(v)=|N_G(v)\cap V'|$. 
Finally, by ${\rm dist}_G(v,w)$ we denote the distance between $v$ and $w$ in $G$.

\section{Proof of Theorem~\ref{MainResult-4-decomposability_of_regular}}\label{Proof_Main_Section}

\subsection{Random colour assignment and distinguished sets}

Suppose $G$ is a $d$-regular graph with $d\geq 54000$. 
 Let us fix a few optimized constants:
\begin{equation}\label{OptimizedMainConstants}
k=0.025, \hspace{0.5cm}s=0.0031, \hspace{0.5cm}r=0.26, \hspace{0.5cm}u=0.131.
\end{equation}
Set
$$K=\left\lceil kd\right\rceil.$$
Let us randomly assign to every vertex $v\in V$ a colour  $C_v=(O_v,I_v)\in [K]^2$, i.e. $O_ v,I_v$, $v\in V$, are independent random variables uniformly distributed over $[K]$. 
Denote the set of vertices to be uncoloured by
$$U=\{v\in V~|~ \exists w\in N_G(v): C_w=C_v\}.$$
and let $U^e$ be the set of edges induced by $U$,
$$U^e=\{vw\in E:v,w\in U \}.$$
We further distinguish the set of edges ,,\emph{touching}'' $U$: 
$$T=\{vw\in E: v\in U \vee w\in U\}$$
and the set of \emph{special} edges:
$$S=\{vw\in E\smallsetminus T: O_v=O_w \vee I_v=I_w\}.$$
We also define a certain set of \emph{risky edges}, where by $|m-n|_K$ we understand the \emph{distance} between integers $m$ and $n$ \emph{modulo} $K$, i.e. $|m-n|_K=\min\{m-n~({\rm mod}~K),~ n-m~({\rm mod}~K)\}$:
$$R=\left\{vw\in E\smallsetminus T: 1\leq |O_v-O_w|_K\leq \frac{sd+7}{2} \vee 1\leq |I_v-I_w|_K\leq \frac{sd+7}{2}\right\}$$
and the set of edges which are risky but not special: 
$$R'=R\smallsetminus S.$$
Finally, we define two more specific subsets of edges: 
$$E'=E\smallsetminus (T\cup R),\hspace{1.0cm}E''=E\smallsetminus (T\cup R\cup S).$$

We shall first prove that we may choose such a vertex colours assignment so that each vertex $v$ of $G$ is adjacent with bounded numbers of vertices of certain types. For clarity it shall usually be  more convenient to refer further on to the corresponding edges, joining a given vertex $v$ with vertices of some types. The only exception adheres to $d_U(v)$, which shall always mean the number of neighbours $w\in U$ of $v$  such that $vw\in E$ (i.e. shall always refer to the neighbourhood of $v$ in the entire $G$).

\subsection{Assuring desired features of the distinguished sets}

\begin{claim}\label{MainProbClaim}
With positive probability, for every $v\in V$:
\begin{itemize}
\item[(i)]  $d_S(v) <  sd$, 
\item[(ii)]  $d_R(v) < rd$, 
\item[(iii)]  $d_U(v) < ud$. 
\end{itemize}
\end{claim}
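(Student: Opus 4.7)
The plan is to apply the Lov\'asz Local Lemma (Theorem~\ref{LLL-symmetric-lopsided}) to the collection of bad events
$$A_v^{(1)}:=\{d_S(v)\ge sd\},\qquad A_v^{(2)}:=\{d_R(v)\ge rd\},\qquad A_v^{(3)}:=\{d_U(v)\ge ud\},\qquad v\in V.$$
I would first check that each expectation is comfortably below its threshold. Writing $K=\lceil kd\rceil$, conditional on $C_v$ one has $\Pr(vw\in S\mid C_v)\le 2/K-1/K^2$ and $\Pr(vw\in R\mid C_v)\le 2(sd+7)/K-((sd+7)/K)^2$ by independence of $O_w$ and $I_w$, giving $\mathbb{E}[d_S(v)\mid C_v]\le 2/k$ and $\mathbb{E}[d_R(v)\mid C_v]\le (2s/k-s^2/k^2)d+O(1)$. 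For part~(iii) the bound $\Pr(w\in U)\le d/K^2$ yields $\mathbb{E}[d_U(v)]\le d^2/K^2\le 1/k^2$. With the constants (\ref{OptimizedMainConstants}) and $d\ge 54000$, these expectations fall short of $sd$, $rd$, $ud$ respectively by a gap linear in $d$.

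For parts (i) and (ii) the relevant indicators, conditional on $C_v$, depend on the disjoint random variables $\{C_w\}_{w\in N_G(v)}$ and are thus mutually independent, so the Chernoff Bound applies directly and gives tails of the form $\exp(-\Omega(d))$. Part (iii) is subtler: the events $\{w\in U\}$ for $w\in N_G(v)$ are correlated whenever two neighbours $w,w'$ of $v$ share a common $G$-neighbour, so one cannot simply invoke Chernoff to the $d$ indicators $\mathbf{1}[w\in U]$. I would instead majorise
$$d_U(v)\le Z_v:=\sum_{w\in N_G(v)}\sum_{x\in N_G(w)}\mathbf{1}[C_w=C_x],$$
observing that the edge-indexed indicators $\mathbf{1}[C_w=C_x]$ on \emph{distinct} edges are pairwise uncorrelated (and even independent on vertex-disjoint edges). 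This gives $\mathrm{Var}(Z_v)=O(d^2/K^2)=O(1)$, and a higher-moment refinement controlling the excess contribution from cyclic edge configurations (or, alternatively, a McDiarmid-type argument applied after conditioning on the colours outside $N_G(v)$) yields the required exponential tail $\Pr(A_v^{(3)})\le\exp(-\Omega(d))$.

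For the LLL step, each $A_v^{(i)}$ depends only on the colours $\{C_u:u\in N_G^2[v]\}$, so two bad events share a random variable only when $\mathrm{dist}_G(v,v')\le 4$. In a $d$-regular graph this bounds the dependency degree by $D\le 3(1+d+d^2+d^3+d^4)=O(d^4)$. Since the three tail bounds just described are exponentially small in $d$, the LLL hypothesis $\Pr(A_v^{(i)})\le 1/(e(D+1))$ is satisfied for $d\ge 54000$ with the optimised constants (\ref{OptimizedMainConstants}), and Theorem~\ref{LLL-symmetric-lopsided} produces a colour assignment avoiding all bad events simultaneously.

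The principal obstacle is the tail bound in (iii). Unlike (i)--(ii), which are immediate consequences of conditional independence plus Chernoff, (iii) requires first replacing the dependent indicators $\mathbf{1}[w\in U]$ by the edge-indexed sum $Z_v$ and then quantifying, beyond pairwise uncorrelation, the effect of higher-order correlations sharply enough to beat the quartic dependency degree. The wide slack between $\mathbb{E}[d_U(v)]=O(1)$ and the threshold $ud=\Theta(d)$, arranged precisely by choosing $u=0.131$, is what makes this last step tight but feasible at the boundary value $d=54000$.
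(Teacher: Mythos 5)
Your overall strategy (Local Lemma plus per-vertex tail bounds, Chernoff for (i)--(ii), a separate device for (iii)) is the paper's strategy, but two steps do not go through as written. The first concerns the dependency degree. You invoke the plain symmetric form of the Local Lemma with $D=O(d^4)$, on the grounds that each bad event is determined by the colours within distance $2$ of its centre, so that two events interact up to distance $4$. But the Chernoff tails available here are razor-thin at $d=54000$: with $s_1=0.0015$, $s_2=0.0016$ one has $\ln\bigl(e^{s_2}/(s/s_1)^s\bigr)\approx -0.00065$, so $d^3\bigl(e^{s_2}/(s/s_1)^s\bigr)^d\approx 0.09$ at $d=54000$, only just below $1/(3e)$; multiplying by a further factor of $d$ makes the required inequality $\mathbb{P}(A)\le 1/(e(D+1))$ fail by more than four orders of magnitude. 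The paper gets $|\Gamma(A_v)|<3d^3-1$ by exploiting the conditional (lopsided) form of Theorem~\ref{LLL-symmetric-lopsided} together with the supersets $S^*\supseteq S$ and $R^*\supseteq R$ (obtained by dropping the exclusion of edges touching $U$), whose vertex counts $d_{S^*}(v)$, $d_{R^*}(v)$ depend, given $C_v$, only on the colours of the neighbours of $v$; this is exactly what shrinks the relevant neighbourhood in the dependency graph from distance $4$ to distance $3$. With your $D$ and the constants of \eqref{OptimizedMainConstants}, the argument would only establish the claim for roughly $d\gtrsim 7\cdot 10^4$, not for the stated threshold $d\ge 54000$.

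The second gap is that part (iii) is not actually proved. Pairwise uncorrelation of the edge indicators gives $\mathrm{Var}(Z_v)=O(1/k^2)$ and hence, via Chebyshev, a tail of order $d^{-2}$ times a constant, far short of the required $O(d^{-3})$ (let alone your $O(d^{-4})$); and since $\mathbb{E}[Z_v]=\Theta(1/k^2)\approx 1600$ is a large constant, pushing to moments high enough to beat a polynomial of degree $3$ or $4$ in $d$ is a substantial computation that your sketch defers entirely to an unspecified ``higher-moment refinement.'' The route you mention only in parentheses is the one that works and is the paper's: condition on an arbitrary fixed assignment of colours to all vertices outside $N_G(v)$, observe that the majorant $|U^*(v)|$ is a function of the $d$ independent neighbour colours which changes by at most $c_i=2$ when a single colour is altered, and apply McDiarmid's inequality to obtain the tail $\bigl(e^{-u_2^2/8}\bigr)^d$ with $u_1=0.059$, $u_2=0.072$. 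That conditioning step is also what makes the resulting bound usable inside the lopsided Local Lemma, since the events in $\mathcal{B}$ are determined by colours at distance at least $2$ from $v$.
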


\begin{proof}
For every $v\in V$, let us denote the events:
\begin{itemize}
\item $S_v: ~d_S(v)\geq sd$;
\item $R_v: ~d_R(v)\geq rd$;
\item $U_v: ~d_U(v)\geq ud$.
\end{itemize}
Let $\mathcal{A}$ be the set of all such events.
For every $v$ and $A_v\in\{S_v,R_v,U_v\}$ we define: 
\begin{equation}\label{GammaDefinition}
\Gamma(A_v)=\bigcup_{w:~{\rm dist}_G(v,w)\leq 3}\{S_w,R_w,U_w\}\smallsetminus \{A_v\}.
\end{equation}
Note that for every $v$ and $A_v\in\{S_v,R_v,U_v\}$,
\begin{equation}\label{GammaEstimation}
|\Gamma(A_v)|\leq 3(d^3-d^2+d) +2 < 3d^3-1
\end{equation}
and 
\begin{equation}\label{DeterminationOfAv}
A_v  ~{\rm is~ determined~ by~ the~colours}~ C_w~  {\rm of} ~w\in V~{\rm such~ that~ dist}_G(v,w)\leq 2.
\end{equation}

Instead of considering $S$ itself, we shall focus on its superset, including also edges touching $U$:
$$S^*=\{vw\in E: O_v=O_w \vee I_v=I_w\}.$$
We obviously have that $d_S(v)\leq d_{S^*}(v)$ for each vertex $v$. 

Consider any $v\in V$ and any subset $\mathcal{B}\subseteq \mathcal{A}\smallsetminus \left(\Gamma(A_v)\cup\{A_v\}\right)$ where $A_v\in\{S_v,R_v,U_v\}$. As the event: $d_{S^*}(v)\geq sd$ is determined by the choices of $C_w$ for vertices $w$ at distance at most $1$ from $v$, by~\eqref{GammaDefinition} and~\eqref{DeterminationOfAv}, this event is mutually independent of $\mathcal{B}$, i.e. $\mathbb{P}(d_{S^*}(v)\geq sd~|~\bigcap_{B\in\mathcal{B}}\overline{B}) = \mathbb{P}(d_{S^*}(v)\geq sd)$. 
Below we thus bound the latter one. 

Note that if we fix a colour $C_v$ of $v$ then each edge $vw\in E(v)$ is independently included in $S^*(v)$ 
with probability 
$$2\cdot\frac{1}{K}-\frac{1}{K^2}< \frac{2}{kd}.$$
Thus, for any fixed $C\in[K]^2$, 
$$\mathbb{E}\left(d_{S^*}(v)~|~C_v=C\right) < \frac{2}{k}=80< 0.0015d = \mu_s$$
(for $d\geq 54000$).
Set $s_1=0.0015$, $s_2=s-s_1=0.0016$ and $\delta_s=\frac{s_2}{s_1}$.
Thus, $sd=(1+\delta_s)\mu_s$ and by the Chernoff Bound, for any $C\in[K]^2$,  
\begin{eqnarray}
\mathbb{P}\left(d_{S^*}(v) \geq sd~|~C_v=C\right) 
&=& \mathbb{P}\left(d_{S^*}(v)\geq (1+\delta_s)\mu_s~|~C_v=C\right) \nonumber\\
&\leq&  \left(\frac{e^{\delta_s}}{(1+\delta_s)^{1+\delta_s}}\right)^{\mu_s}
= \frac{e^{s_2d}}{\left(\frac{s}{s_1}\right)^{sd}}  \nonumber\\
&=& \left(\frac{e^{s_2}}{\left(\frac{s}{s_1}\right)^{s}}\right)^d 
< \frac{1}{e3d^3}. \label{dS*Conditional}
\end{eqnarray}
In order to prove the last inequality above, it is enough to set $s_3=e^{s_2}/(\frac{s}{s_1})^s$, $f_s(d)=d^3(s_3)^d$ and note that $f_s(54000) <0.1<  1/(3e)$ while $f_s$ is decreasing for $d$ at least $3/\ln (s_3^{-1})<4613$, which follows by a standard analysis of the first derivative of $f_s$. By~\eqref{dS*Conditional},
\begin{equation}\label{ProbSv}
 \mathbb{P}\left(S_v~|~\bigcap_{B\in\mathcal{B}}\overline{B}\right) 
\leq \mathbb{P}\left(d_{S^*}(v) \geq sd~|~\bigcap_{B\in\mathcal{B}}\overline{B}\right)
= \mathbb{P}\left(d_{S^*}(v) \geq sd\right) 
<\frac{1}{e3d^3}.
\end{equation}

The analysis concerning $R_v$ is analogous. Set
$$R^*=\left\{vw\in E: 1\leq |O_v-O_w|_K\leq \frac{sd+7}{2} \vee 1\leq |I_v-I_w|_K\leq \frac{sd+7}{2}\right\},$$
where $R\subseteq R^*$.
Again the event: $d_{R^*}(v)\geq rd$ is determined by the choices of $C_w$ for vertices $w$ at distance at most $1$ from $v$. Hence, by~\eqref{GammaDefinition} and~\eqref{DeterminationOfAv}, this event is mutually independent of $\mathcal{B}$.
For a fixed colour $C_v$ of $v$, each edge $vw\in E(v)$ is independently included in $R^*(v)$ with probability 
$$2\cdot\left(\frac{2\cdot\left\lfloor\frac{sd+7}{2}\right\rfloor}{K}\right)
-\left(\frac{2\cdot\left\lfloor\frac{sd+7}{2}\right\rfloor}{K}\right)^2.$$ 
Thus, as the function $f(x)=2x-x^2$ is increasing for $x\leq1$ and 
$2\cdot\lfloor\frac{sd+7}{2}\rfloor/K\leq \frac{sd+7}{K}\leq \frac{sd+7}{kd} = \frac{s}{k}+\frac{7}{kd}<0.1292$, 
this probability is bounded above by $f(0.1292)<0.242$. Hence, for any fixed $C\in [K]^2$,
$$\mathbb{E}\left(d_{R^*}(v)~|~C_v=C\right) < 0.242d = \mu_r.$$
Set $r_1=0.242$, $r_2=r-r_1=0.018$ and $\delta_r=\frac{r_2}{r_1}$.
Thus, $rd=(1+\delta_r)\mu_r$ and by the Chernoff Bound, for any $C\in[K]^2$, 
\begin{eqnarray}
\mathbb{P}\left(d_{R^*}(v) \geq rd~|~C_v=C\right) 
&=& \mathbb{P}\left(d_{R^*}(v)\geq (1+\delta_r)\mu_r~|~C_v=C\right) \nonumber\\
&\leq&  \left(\frac{e^{\delta_r}}{(1+\delta_r)^{1+\delta_r}}\right)^{\mu_r}
= \frac{e^{r_2d}}{\left(\frac{r}{r_1}\right)^{rd}}  \nonumber\\
&=& \left(\frac{e^{r_2}}{\left(\frac{r}{r_1}\right)^{r}}\right)^d 
< \frac{1}{e3d^3}. \label{dR*Conditional}
\end{eqnarray}
Again, the last inequality above can be proven via setting $r_3=e^{r_2}/(\frac{r}{r_1})^r$, $f_r(d)=d^3(r_3)^d$ and noting that $f_r(54000) <0.1<  1/(3e)$ while $f_r$ is decreasing for $d$ at least $3/\ln (r_3^{-1})<4592$.
By~\eqref{dR*Conditional},
\begin{equation}\label{ProbRv}
 \mathbb{P}\left(R_v~|~\bigcap_{B\in\mathcal{B}}\overline{B}\right) 
\leq \mathbb{P}\left(d_{R^*}(v) \geq rd~|~\bigcap_{B\in\mathcal{B}}\overline{B}\right)
= \mathbb{P}\left(d_{R^*}(v) \geq rd\right) 
<\frac{1}{e3d^3}.
\end{equation}

The analysis concerning $U_v$ shall be different in at least two aspects.
First of all, for the sake of optimization of the lower bound for $d$ within our argument, we shall not require the investigated event to be mutually independent of $\mathcal{B}$ this time.
We however could not avoid some dependencies even if we did not strive to optimize our approach.
Thus, we are moreover forced to use a different concentration tool, i.e. the McDiarmid's Inequality. In order to facilitate it efficiently, we shall have to be  potentially more wasteful than above. Namely, while estimating $d_U(v)$ for the given vertex $v$ we shall excessively take into account every neighbour of $v$ whose colour is repeated in the neighbourhood of $v$, even if there is no edge between the corresponding uniformly coloured neighbours of $v$. Let us formally define the following superset of $U(v)$:
$$U^*(v)=\{w\in N_G(v)~|~ \exists w'\in N_G(w)\cup N_G(v)\smallsetminus\{w\}: C_{w'}=C_{w}\}.$$
Note that by~\eqref{GammaDefinition} and~\eqref{DeterminationOfAv}, the events in $\mathcal{B}$ are determined by colours assigned to vertices at distance at least $2$ from $v$. In order to bound the probability $\mathbb{P}(|U^*(v)|\geq ud~|~\bigcap_{B\in\mathcal{B}}\overline{B})$ it is thus sufficient to provide some universal bound conditioning on any set of colours assigned to all vertices at distance at least $2$ from $v$ in $G$. We shall additionally assume that the colour of $v$ is arbitrarily fixed. Thus let us consider any event $A$ fixing any specific colours for all vertices in $V\smallsetminus N_G(v)$ (such that $A\subseteq \bigcap_{B\in\mathcal{B}}\overline{B}$).

Let us denote the neighbours of $v$ by $w_1,\ldots,w_d$. For the fixed $A$, the event: $|U^*(v)|\geq ud$ depends on $d$ independent random variables $W_1,\ldots, W_d$ assigning colours to $w_1,\ldots,w_d$, respectively, uniformly at random from $[K]^2$.
 Let $Z_i$ be a binary variable which takes value $1$ if the colour of $w_i$ appears at least twice in the neighbourhood of $v$ (regardless of adjacency of the corresponding neighbours of $v$) or it is the same as the colour of any neighbour of $w_i$ outside $N_G(v)$; $Z_i$ is valued $0$ otherwise. Let 
$$Z=\sum_{i=1}^dZ_i.$$
Note $Z=|U^*(v)|$.

We shall first bound $\mathbb{E}(Z)$. Consider any given $w_i$. Suppose $A$ fixes exactly $d'\leq d$ distinct colours in $N_G(w_i)\smallsetminus N_G(v)$.
Then $Z_i$ shall be valued $0$ if $w_i$ is assigned a colour $C$ distinct from all the $d'$ colours in $N_G(w_i)\smallsetminus N_G(v)$ and moreover, no vertex in $N_G(v)\smallsetminus\{w_i\}$ is assigned $C$. Thus, as $1-x\geq e^{-\frac{x}{1-x}}$ for $0\leq x<1$ and $e^{-x}\geq 1-x$,

\begin{eqnarray}
\mathbb{P}\left(Z_i=0~|~A\right)
&=& \frac{K^2-d'}{K^2}\cdot \left(1-\frac{1}{K^2}\right)^{d-1} 
\geq \frac{K^2-d}{K^2}\cdot e^{-\frac{1}{K^2-1}(d-1)} \nonumber\\
&\geq& \left(1-\frac{d}{K^2}\right) \cdot \left(1-\frac{d-1}{K^2-1}\right)
\geq \left(1-\frac{d}{K^2}\right)^2  \nonumber\\
&\geq& \left(1-\frac{d}{(kd)^2}\right)^2
= 1-\left(\frac{2}{k^2d}-\frac{1}{k^4d^2}\right). \nonumber
\end{eqnarray}
Thus, 
$$\mathbb{E}\left(Z_i~|~A\right) = \mathbb{P}\left(Z_i=1~|~A\right) = 1 - \mathbb{P}\left(Z_i=0~|~A\right) 
\leq \frac{2}{k^2d}-\frac{1}{k^4d^2},$$
and therefore,
$$\mathbb{E}\left(Z~|~A\right)\leq d\cdot\left(\frac{2}{k^2d}-\frac{1}{k^4d^2}\right) <0.059 d.$$
Note that changing the value of any single $W_i$, i.e. the colour of $w_i$, may change the value of $Z$ by at most $c_i=2$. Therefore, by Theorem~\ref{McDiarmidTheorem}, for $u_1=0.059$ and $u_2=u-u_1=0.072$,
\begin{eqnarray}
\mathbb{P}\left(Z\geq ud~|~A\right) 
&=& \mathbb{P}\left(Z\geq u_1d+u_2d~|~A\right)  \nonumber\\
&\leq& \mathbb{P}\left(Z\geq \mathbb{E}(Z~|~A)+u_2d~|~A\right)  \nonumber\\
&\leq& e^{-\frac{\left(u_2d\right)^2}{2\sum_{i=1}^d 2^2}}
= \left(e^{-\frac{(u_2)^2}{8}}\right)^d
< \frac{1}{e3d^3}. \label{dU*Conditional}
\end{eqnarray}
Once again, the last inequality above can be proven via setting $u_3=e^{-(u_2)^2/8}$, $f_u(d)=d^3(u_3)^d$ and noting that $f_u(54000) < 0.11<  1/(3e)$ 
while $f_u$ is decreasing for $d$ at least $3/\ln (u_3^{-1}) =24/(u_2)^2< 4630$.
As $|U^*(v)| =Z$, by~\eqref{dR*Conditional} we thus obtain that:
\begin{equation}\label{ProbUv}
 \mathbb{P}\left(U_v~|~\bigcap_{B\in\mathcal{B}}\overline{B}\right) 
\leq \mathbb{P}\left(|U^*(v)| \geq ud~|~\bigcap_{B\in\mathcal{B}}\overline{B}\right)
<\frac{1}{e3d^3}.
\end{equation}

By~\eqref{GammaEstimation}, \eqref{ProbSv}, \eqref{ProbRv} and~\eqref{ProbUv} the claim follows by the Lov\'asz Local Lemma, i.e. Theorem~\ref{LLL-symmetric-lopsided}.
\end{proof}

\subsection{Decomposition of $G$ to two similar subgraphs}

Fix any vertex colours assignment consistent with (i) -- (iii) in Claim~\ref{MainProbClaim}. 
Basing on this we shall now colour the edges of $G$ with $0$ and $1$ according to the following rules.
Note each of these regards a different subset of edges, which together partition $E$.
\begin{itemize}
\item[$(0^\circ)$] We colour $0$ any edge $vw\in S$ such that $I_v=I_w$ (i.e. $O_v\neq O_w$).
\item[$(1^\circ)$] We colour $1$ any edge $vw\in S$ such that $O_v=O_w$ (i.e. $I_v\neq I_w$).
\item[$(2^\circ)$] The subgraph induced by the edges in $R'\cup U^e$ we colour according to Lemma~\ref{Alon-Wei_lemma}  using a constant function $z\equiv 1/2$.
\item[$(3^\circ)$] The subgraph induced by the edges in $T\smallsetminus U^e$ we colour according to Lemma~\ref{Alon-Wei_lemma}  using a constant function $z\equiv 1/2$.
\item[$(4^\circ)$] The subgraph induced by the (remaining) edges in $E''$
we colour according to Lemma~\ref{Alon-Wei_lemma}  using a constant function $z\equiv 1/2$.
\end{itemize}

For $i=0,1$, we denote by $E_i,U^e_i,T_i,S_i,R_i,R'_i,E'_i,E''_i$ the subsets of edges of $E,U^e,T,S,R,R',E',E''$, respectively, coloured $i$.
We also denote 
$$H_0=(V,E_0),\hspace{1.0cm}H_1=(V,E_1)$$ 
graphs making up a decomposition of $G$.

We shall show that $H_0$ may be further decomposed to two locally irregular subgraphs $F_1$ and $F_2$. An analogous decomposition shall exist for $H_1$, as it has exactly the same features as listed below for $H_0$.

Set 
$$H'_0=(V\smallsetminus U,E'_0)$$
and note that by Rules $(2^\circ)$--$(4^\circ)$ and Lemma~\ref{Alon-Wei_lemma}, for every $v\in V$,
\begin{eqnarray}
|d_{R'_0}(v)-\frac{1}{2}d_{R'}(v)| &\leq& 1, \label{R'0DegClose}\\
|d_{U^e_0}(v)-\frac{1}{2}d_{U^e}(v)| &\leq& 1, \label{Ue0DegClose}\\
|d_{T_0\smallsetminus U^e_0}(v)-\frac{1}{2}d_{T\smallsetminus U^e}(v)| &\leq& 1, \label{T0Ue0DegClose}\\
|d_{E''_0}(v)-\frac{1}{2}d_{E''}(v)| &\leq& 1. \label{E''0DegClose}
\end{eqnarray}

As for every $v\in U$, $d=d_{G}(v) = d_{T}(v) = d_{U^e}(v)+d_{T\smallsetminus U^e}(v)$ and $d_{H_0}(v) = d_{T_0}(v) = d_{U^e_0}(v)+d_{T_0\smallsetminus U^e_0}(v)$, then by~\eqref{Ue0DegClose} and~\eqref{T0Ue0DegClose},
\begin{equation}\label{degree_in_U}
d_{H_0}(v)  \in \left[\frac{d}{2}-2,\frac{d}{2}+2\right]\hspace{0.5cm}{\rm for}\hspace{0.5cm}v\in U.
\end{equation}
Note also that by~\eqref{Ue0DegClose}, \eqref{T0Ue0DegClose} and Claim~\ref{MainProbClaim}(iii),
\begin{eqnarray}
d_{U^e_0}(v) < \frac{ud}{2}+1\hspace{0.5cm}&{\rm for}&\hspace{0.5cm}v\in U, \label{Ue0BoundInU} \\
d_{T_0\smallsetminus U^e_0}(v) > \frac{d-ud}{2}-1\hspace{0.5cm}&{\rm for}&\hspace{0.5cm}v\in U. \label{T0Ue0BoundInU} 
\end{eqnarray}

For every $v\in V\smallsetminus U$, the edges incident with $v$ in 
$G$ can be partitioned as follows: 
$E(v)=S(v)\cup R'(v)\cup T(v)\cup E''(v)$, where $T(v)=(T\smallsetminus U^e)(v)$.
Thus, analogously as above, by~\eqref{R'0DegClose}, \eqref{T0Ue0DegClose} and~\eqref{E''0DegClose},
\begin{equation}\label{degree_outside_U_1}
d_{E_0\smallsetminus S_0}(v) \in \left[\frac{d-d_S(v)}{2}-3,\frac{d-d_S(v)}{2}+3\right].
\end{equation}
As $d_{E_0\smallsetminus S_0}(v) = d_{H_0}(v) - d_{S_0}(v)$, where $S_0\subseteq S$, and hence $0\leq d_{S_0}(v) \leq d_S(v)$, by~\eqref{degree_outside_U_1},
$$d_{H_0}(v) \in \left[\frac{d-d_S(v)}{2}-3,\frac{d+d_S(v)}{2}+3\right].$$
By Claim~\ref{MainProbClaim}(i) we thus obtain that
\begin{equation}\label{degree_outside_U_3}
d_{H_0}(v) \in \left(\frac{d-sd}{2}-3,\frac{d+sd}{2}+3\right)\hspace{0.5cm}{\rm for}\hspace{0.5cm}v\in V\smallsetminus U.
\end{equation}

Note also that for every $v\in V\smallsetminus U$, by~\eqref{E''0DegClose}, Claim~\ref{MainProbClaim} and~\eqref{OptimizedMainConstants},
\begin{eqnarray}
d_{H'_0}(v) &=& d_{E'_0}(v) \geq d_{E''_0}(v) \geq \frac{d_{E''}(v)}{2}-1 = \frac{d_{E\smallsetminus (S\cup T\cup R)}(v)}{2}-1 \nonumber\\
&\geq& \frac{d - d_S(v) - d_T(v) - d_R(v)}{2}-1 > \frac{d-sd-ud-rd}{2}-1 \nonumber\\
&\geq& 12kd+12>12K. 
\label{12K6Lambda}
\end{eqnarray}

\subsection{A decomposition of $H_0$ to two locally irregular subgraphs}

Let 
\begin{equation}\label{d1definition}
d_1 = \frac{d}{6} -\frac{sd}{3}- \frac{ud}{6}   - \frac{13}{3}
\end{equation}
and 
\begin{equation}\label{DDefinition}
D=\{0,1,\ldots,\lceil d_1\rceil-1\}.
\end{equation}
Note that $|D|\geq d_1$. 
By~\eqref{OptimizedMainConstants},
$$\frac{ud}{2}+1\leq d_1\leq \frac{d-ud}{2}-1.$$
Thus, by~\eqref{Ue0BoundInU} and~\eqref{T0Ue0BoundInU}, 
\begin{equation}\label{ConstraintsU-D}
d_{U^e_0}(v)<|D|\hspace{0.5cm}{\rm and}\hspace{0.5cm} d_{T_0\smallsetminus U^e_0}(v)> \lceil d_1\rceil -1\hspace{0.5cm} {\rm for}\hspace{0.5cm} v\in U.
\end{equation}

For every vertex  $v\in U$ we choose any subset $T_v$ of the set of edges $T_0(v)\smallsetminus U^e_0(v)$ with 
\begin{equation}\label{TvInD}
|T_v|\in D
\end{equation}
 so that 
\begin{equation}\label{DistinctDegF2U1}
d_{H_0}(v)-|T_v| \neq d_{H_0}(w)-|T_w|\hspace{0.5cm} {\rm for~ every}\hspace{0.5cm} vw\in U^e_0.
\end{equation}
By~\eqref{ConstraintsU-D} we may do this greedily, analysing one vertex in $U$ after another. 
 
Let 
$$T_U=\bigcup_{v\in U}T_v.$$
Set 
\begin{equation}\label{LambdaDefinition}
\lambda = 2K.
\end{equation}
With every  $v\in V\smallsetminus U$ we associate an integer $t(v)\in \{0,1,\ldots,\lambda-1\}$ such that
\begin{eqnarray}\label{MainT(v)Condition}
d_{T_U\cup R_0}(v)+t(v)\equiv 2O_v \pmod{\lambda}. 
\end{eqnarray}
Note that by~\eqref{12K6Lambda} and~\eqref{LambdaDefinition}, $d_{H'_0}(v)>6\lambda$ for every $v\in V\smallsetminus U$. 
Thus we may apply Lemma~\ref{1_6Lemma} to $H'_0$ with $\lambda_v=\lambda$ for $v\in V\smallsetminus U$ and $t:V\smallsetminus U \to \{0,1,\ldots,\lambda-1\}$ fulfilling~\eqref{MainT(v)Condition}.
The resulting subgraph of $H'_0$ we denote by $H$. 
Thus, by Lemma~\ref{1_6Lemma}  and~\eqref{MainT(v)Condition}, for every $v\in V\smallsetminus U$,
\begin{eqnarray}
d_H(v)&\in&\left[\frac{d_{H'_0}(v)}{3},\frac{2d_{H'_0}(v)}{3}\right], \label{dHv1323}\\
d_H(v)&\equiv& t(v), t(v)+1 \pmod {\lambda}, \label{dHvMod}
\end{eqnarray}
where the latter means that $d_H(v)$ is equivalent either to $t(v)$ or to  $t(v)+1$ modulo $\lambda$. 
We finally define a decomposition of $H_0$ to $F_1=(V,E(F_1))$ and $F_2=(V,E(F_2))$ by setting:
\begin{eqnarray}
E(F_1) &=& T_U\cup R_0\cup E(H), \label{EF1Def} \\
E(F_2) &=& E_0\smallsetminus E(F_1) = (E'_0\smallsetminus E(H))\cup (T_0\smallsetminus T_U). \label{EF2Def}
\end{eqnarray}

Note that by~\eqref{MainT(v)Condition}, \eqref{dHvMod} and~\eqref{EF1Def},
for every  $v\in V\smallsetminus U$,
\begin{eqnarray}\label{MainF1ModCondition}
d_{F_1}(v) \equiv 2O_v, 2O_v+1 \pmod{\lambda}.
\end{eqnarray}

\subsection{Justification of no degree conflicts in $F_1$ and $F_2$}

Consider any edge $vw\in E_0$. By~\eqref{EF2Def} it must be an edge of  either $F_1$ or $F_2$.
We shall exhibit that in each of these cases its ends shall have distinct degrees in the corresponding graph.

Suppose first that $v,w\in U$, i.e. $vw\in U^e_0$. Then, by~\eqref{EF2Def}, $vw\in F_2$ and 
$d_{F_2}(v) = d_{H_0}(v)-|T_v|$,  $d_{F_2}(w)=d_{H_0}(w)-|T_w|$.  
Thus, by~\eqref{DistinctDegF2U1}, $d_{F_2}(v) \neq d_{F_2}(w)$.

Suppose next that $v,w\in V\smallsetminus U$. Then, by Rule $(1^\circ)$, $O_v\neq O_w$.

If $vw\in E(F_1)$, this and~\eqref{MainF1ModCondition} immediately imply that
$d_{F_1}(v)\neq d_{F_1}(w)$ (even modulo $\lambda=2K$).

Otherwise, if $vw\in E(F_2)$, then by~\eqref{EF1Def} and~\eqref{EF2Def}, $vw\notin R$. Thus, 
$|O_v-O_w|_K > \frac{sd+7}{2}$ (as $O_v\neq O_w$).
Hence, $|2O_v-2O_w|_{2K} > sd+7$. Therefore, by~\eqref{MainF1ModCondition},
\begin{equation}\label{VUVUF2-C1}
|d_{F_1}(v)-d_{F_2}(w)|>sd+6.
\end{equation}
Moreover, by~\eqref{degree_outside_U_3}, 
\begin{equation}\label{VUVUF2-C2}
|d_{H_0}(v)-d_{H_0}(w)|<sd+6.
\end{equation} 
Since $d_{F_2}(v)=d_{H_0}(v)-d_{F_1}(v)$ and  $d_{F_2}(w)=d_{H_0}(w)-d_{F_1}(w)$,
then~\eqref{VUVUF2-C1} and~\eqref{VUVUF2-C2} imply that $d_{F_2}(v)\neq d_{F_2}(w)$.

It thus remains to consider the case when $v\in V\smallsetminus U$ and $w\in U$. 
Note that then, by~\eqref{EF1Def}, \eqref{DDefinition} and~\eqref{TvInD}, we have 
\begin{equation}\label{dF1InUUpperD1}
d_{F_1}(w)=|T_w|<d_1.
\end{equation}

Suppose first that $vw\in F_1$.
Note that by~\eqref{EF1Def}, \eqref{dHv1323}, \eqref{R'0DegClose}, \eqref{E''0DegClose}, Claim~\ref{MainProbClaim} and~\eqref{d1definition},
\begin{eqnarray}
d_{F_1}(v) &\geq& d_{R_0}(v) + d_H(v)  
\geq d_{R_0}(v) + \frac{1}{3}d_{H'_0}(v)  \nonumber\\
&\geq& \frac{1}{3} \left(d_{R_0}(v)+d_{H'_0}(v)\right) 
= \frac{1}{3} d_{E_0\smallsetminus T_0}(v) \nonumber \\
&\geq& \frac{1}{3} d_{E_0\smallsetminus (S_0\cup T_0)}(v)
= \frac{1}{3} \left(d_{R'_0}(v)+d_{E''_0}(v)\right) \nonumber\\
 &\geq& \frac{1}{3} \left(\frac{d_{R'}(v)}{2}-1+\frac{d_{E''}(v)}{2}-1\right)  
 =  \frac{1}{3} \left(\frac{d_{E\smallsetminus(S\cup T)}(v)}{2}-2\right) \nonumber\\
&>& \frac{1}{3} \left(\frac{d-sd-ud}{2}-2\right) 
= \frac{d}{6}-\frac{sd}{6}-\frac{ud}{6}-\frac{2}{3}  >d_1. \label{DF1InVULower}
\end{eqnarray}
Hence, by~\eqref{dF1InUUpperD1} and~\eqref{DF1InVULower},  $d_{F_1}(w) < d_{F_1}(v)$.

Assume finally that $vw\in F_2$.
Then, by~\eqref{EF2Def}, \eqref{dF1InUUpperD1}, \eqref{degree_in_U} and~\eqref{d1definition},
\begin{eqnarray}
d_{F_2}(w) &=& d_{H_0}(w)-d_{F_1}(w)> d_{H_0}(w)-d_1
\geq \frac{d}{2}-2-d_1  \nonumber\\
&=& \frac{d}{3} + \frac{sd}{3} + \frac{ud}{6}+\frac{7}{3}. \label{dF2wInULowerBound}
\end{eqnarray}

On the other hand, by~\eqref{EF2Def}, \eqref{dHv1323}, \eqref{degree_outside_U_3}, \eqref{T0Ue0DegClose} and Claim~\ref{MainProbClaim}(iii),
\begin{eqnarray}
d_{F_2}(v) &=& d_{T_0}(v)-d_{T_U}(v) + d_{H'_0}(v)-d_H(v) \nonumber\\
&\leq& d_{T_0}(v)+\frac{2}{3}d_{H'_0}(v) 
\leq d_{T_0}(v) + \frac{2}{3}\left(d_{H_0}(v)-d_{T_0}(v)\right) \nonumber\\
&=& \frac{2}{3}d_{H_0}(v) + \frac{1}{3}d_{T_0}(v) 
< \frac{2}{3}\left(\frac{d+sd}{2}+3\right) + \frac{1}{3}\left(\frac{ud}{2}+1\right) \nonumber\\
&=& \frac{d}{3}+\frac{sd}{3}+\frac{ud}{6}+\frac{7}{3}. \label{dF2vInVUUpper}
\end{eqnarray}
Therefore, by~\eqref{dF2wInULowerBound} and~\eqref{dF2vInVUUpper}, $d_{F_2}(v)<d_{F_2}(w)$.

As adjacent vertices have distinct degrees in $F_1$ and $F_2$, these graphs are indeed locally irregular.

This finishes the proof of Theorem~\ref{MainResult-4-decomposability_of_regular}, as a decomposition of $H_1$ to two locally irregular subgraphs can be performed in exactly the same manner
as the decomposition of $H_0$ to $F_1$ and $F_2$. \qed

\section{Final remarks}.

There are several ways one may diverse our approach, deviating here and there in alternative directions.
Our primary goal was however optimization of the lower bound for $d$ in Theorem~\ref{MainResult-4-decomposability_of_regular}. We thus finally decided to present a variant of our approach which yields 
the best result in this regard.
On the other hand, we wanted to keep balance between potential further minor reductions of the mentioned bound and maintaining clarity of presentation of our approach and its accessibility. 
We are thus aware that our result could still be somewhat improved via more tedious calculations or more complex construction and analysis.
A potentially largest gain could additionally be attained due to applications of new results concerning stochastic processes, which imply improved variants of McDiarmid's Ineqality.

Our secondary goal was a clear presentation of our approach, which we reckon interesting on its own, hoping its elements may be useful in other contexts. In particular, we believe it can be developed towards obtaining new results concerning decomposability to locally irregular subgraphs of  general graphs with degrees large enough, not only regular ones. Several technical obstacles must however still be overcome to that end. 

Let us finally mention we could easily perform our proof without using Lemma~\ref{Alon-Wei_lemma} of Alon and Wei, and exploit the classical Euler's Theorem instead. It was however very useful in an another variant of our argument, which was eventually altered for the sake of optimizing the lower bound for $d$. We however decided to retain Lemma~\ref{Alon-Wei_lemma} in the paper, as it indeed gives many options for possible applications in this and related problems, and can potentially be used while developing further some of our ideas.\\

Declarations of interest: none.

\end{document}